\newtheorem{theorem}{Theorem}
\newtheorem*{theoremS}{Sarason's Theorem}
\theoremstyle{plain}
\newtheorem{prop}{Proposition}
\newtheorem*{theo}{Theorem}
\newtheorem{lemma}{Lemma}
\theoremstyle{definition}
\newtheorem*{rems}{Remarks}
\theoremstyle{definition}
\DeclareMathOperator{\id}{id} 
\DeclareMathOperator{\im}{Im}
\def\C{\mathbb{C}}%
\def\D{\mathbb{D}}%
\def\H{\mathcal{H}}%
\def\M{\mathcal{M}}%
\begin{document}

\title{The complement of $\M(a)$ in $\H(b)$ }

\author{M. T. Nowak, P. Sobolewski and A. So\l tysiak }

\subjclass [2010]{47B32, 46E22,  30H05}

\keywords{Toeplitz operators, de Branges-Rovnyak spaces, rigid
functions, nonextreme functions, kernel functions} \maketitle

\begin{abstract}
Let $b$ be a nonextreme function in the unit ball of $H^{\infty}$ on the unit disk
$\mathbb D $ and let  $a$ be an outer $H^{\infty}$ function such
that $|a|^2+|b|^2=1$ almost everywhere on $\partial \D$. The
sufficient and necessary conditions for the orthogonal complement of
$\M(a)$ in $\H(b)$ be finite dimensional  has been given by
D. Sarason in \cite{Sarason4}. Here we describe this space
explicitly.
\end{abstract}

\section{Introduction}
Let $H^2$ denote the standard Hardy space on the unit disk $\D$ and
let $\partial\D$ denote its boundary. For $\varphi\in
L^{\infty}(\partial\D)$ the Toeplitz operator on $H^2$ is given by
$T_{\varphi}f=P_{+}(\varphi f)$, where $P_{+}$ is the orthogonal
projection of $L^2(\partial\D)$ onto $H^2$. For a nonconstant
function $b$ in the unit ball of $H^\infty$ the de Branges-Rovnyak
space $\H(b)$  is the image of  $H^2$ under  the operator
$(1-T_bT^*_b)^{1/2}$ with the corresponding range norm. It is known
\cite[p.10]{Sarason4} that $\H(b)$ is a Hilbert space with
reproducing kernel
\[
k_w^b(z)=\frac{1-\overline{b(w)}b(z)}{1-\bar{w}z}\quad(z,w\in\D).
\]

Here we are interested in the case when the function $b$ is not an
extreme point of the unit ball of $H^{\infty}$, that is the case
when the function $\log(1-|b|)$ is integrable on $\partial\D$
(\cite[p. 138]{hoffman}). Then there exists an outer function $a\in
H^{\infty}$ for which $|a|^2+|b|^2=1$ a.e. on $\partial\D$.
Moreover, if we suppose that $a(0)>0$, then $a$ is uniquely
determined, and we say that $(b,a)$ is a pair. Since the function
$\frac {1+b}{1-b}$ has a positive real part,
there exists a positive measure
$\mu$ on $\partial\D$ such that
\begin{equation}
\frac{1+b(z)}{1-b(z)}=\int_{\partial\D}\frac {1+ e^{-i\theta}z}{1-
e^{-i\theta}z}\,d\mu(e^{i\theta})+i\im\frac{1+b(0)}{1-b(0)},\quad
|z|<1. \label{Herglotz}
\end{equation}
Moreover the function
$\left|\frac{a}{1-b}\right|^2$ is the Radon-Nikodym derivative of
the absolutely continuous component of $\mu$ with respect to the
normalized Lebesgue measure. So if $f=\frac a{1-b}$, then $f$ is an
outer function which belongs to $H^2$.  If the measure $\mu$ is
absolutely continuous the pair $(b,a)$ is called \emph{special}. The
operator $V_b$ acting on $H^2(\mu)$ (the closure of polynomials
in $L^2(\mu)$) with values in the Branges-Rovnyak space $\H(b)$ is
given by
\begin{equation}
(V_bq)(z)=(1-b(z))\int_{\partial\D}\frac{q(e^{i\theta})}{1-e^{-i\theta}z}\,d\mu(e^{i\theta}).
\label{Vb}
\end{equation}
It is known that $V_b$ is an isometry of $H^2(\mu)$ onto $\H(b)$
(\cite{Sarason4}, \cite{FM}).  Furthermore the Toeplitz operators
with an unbounded symbols $\varphi\in L^2(\partial\D)$ can be
defined as unbounded operators on $H^2$ (with the domains containing
$H^{\infty}$)  that are continuous operators of $H^2$ into
$H(\D)$, the space of holomorphic functions on $\D$ with the
topology of the locally uniform convergence. Moreover, if
$(b,a)$ is a pair and
$f=\frac{a}{1-b}$, then the operator $T_{1-b}T_{\bar{f}}$ is an
isometry of $H^2$ into $\H(b)$. Its range is all of $\H(b)$ if and
only if the pair $(b,a)$ is special (see \cite[IV-12,13]{Sarason4}).

If  $(b,a)$ is a pair then $\M(a)$ (the range of
$T_aH^2=aH^2$ equipped with  the range norm) is contained
contractively in $\H(b)$. Moreover, if $(b,a)$ is
special, then  $\M(a)$ is dense in $\H(b)$ if and
only if $f^2$ is a rigid function. Recall that a function  $f\in
H^1$  is called \emph{rigid} if no other functions in
$H^1$, except for positive scalar multiples of $f$, have the same
argument as $f$ a.e. on $\partial\D$.

Let the Toeplitz operator $ T_{z}$, that is, the unilateral shift on
$H^2$, be denoted by $S$. It is known that the  de Branges-Rovnyak
spaces $\H(b)$  are $S^*$-invariant.  In the case $b$ is
nonextreme the space $\H(b)$ is also invariant under the
unilateral shift $S$. Furthermore, in this case, polynomials are
dense in $\H(b)$ and $ b\in \H(b)$.

Let $\H_0(b)$ denote the
orthogonal complement of $\M(a)$ in $\H(b)$. Let  $Y$ be the
restriction of the shift operator $S$ to $\H(b)$. It is worth to
mention here that since the closure of $\M(a)$ in $\H(b)$ is  $Y$-invariant,
the space $\H_0(b)$ is $Y^*$-invariant. Let $Y_0$
be the compression of $Y$ to the subspace $\H_0(b)$.
Characterizations when $\H_0(b)$ has finite dimension are given in
Chapter X of \cite{Sarason4}. Now we cite some results included
therein.  It turns out that in this case the space $\H_0(b)$ depends
on the spectrum of the restriction of the operator $Y^*$ to
$\H_0(b)$ which actually equals $Y^*_0$. The spectrum of $Y_0$ is
contained in the unit circle. We know  from \cite{Sarason4} that the
codimension of $\overline{\M(a)}$ in $\H(b)$ is $N$  if  and only if
the operator $Y^*_0$ has eigenvalues $z_1, z_2,\dots, z_s$ on the
unit circle with their algebraic multiplicities $n_1,\dots, n_s$ and
$N=n_1+n_2+\dots+n_s$. Then
\[
\H_0(b)=\bigoplus_{j=1}^s \ker(Y^*_0-\bar
{z}_j)^{n_j}.
\]

For  $\lambda\in \partial \D$ let
$\mu_{\lambda}$ be the measure on $\partial \D$ whose  Poisson
integral is the real part of $\frac
{1+\overline{\lambda}b}{1-\overline{\lambda}b}$. If
$F_{\lambda}=\frac{a}{1-\overline{\lambda}b}$, then the Radon-Nikodym
derivative of the absolutely continuous component of $\mu_{\lambda}$
is $|F_{\lambda}|^2$.

In \cite{Sarason4} the following condition for
$\M(a)$ to have a finite defect is given.

\begin{theo} Let $N$ be a positive integer, and let $\lambda$ be a point on
$\partial \D$ such that the measure $\mu_{\lambda}$ is absolutely
continuous. Then the following conditions are equivalent.
\begin{itemize}
\item[(i)]
The codimention of
 $\overline{\M(a)}$ in $\H(b)$ is $N$.
\item[(ii)]
$F_{\lambda}=pf$, where $p$ is a polynomial of degree $N$
having all of its roots on the unit circle, and $f$ is a function in
$H^2$ whose square is rigid.
\end{itemize}
\end{theo}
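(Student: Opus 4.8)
The plan is to remove the parameter $\lambda$ by a rotation, then to read off the finite-dimensional complement from the generalized eigenvectors of $Y_0^*$ supplied by the decomposition quoted above, and finally to translate each boundary eigenvalue, counted with multiplicity, into a root of a polynomial factor of $F_\lambda$. First I would reduce to the case $\lambda=1$. Since $|\lambda|=1$, the reproducing kernels of $\H(b)$ and $\H(\bar\lambda b)$ coincide, $k_w^{\bar\lambda b}=k_w^b$, so $\H(b)=\H(\bar\lambda b)$ with equal norms; moreover $(\bar\lambda b,a)$ is again a pair, its outer function is exactly $F_\lambda=a/(1-\bar\lambda b)$, and the hypothesis that $\mu_\lambda$ is absolutely continuous says precisely that this pair is \emph{special}. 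As $\M(a)$, and hence $\H_0$, is unchanged, it suffices to prove the statement for a special pair, with $F_\lambda$ playing the role of $f=a/(1-b)$. In particular the case $N=0$ is already available: for a special pair $\M(a)$ is dense in $\H(b)$ exactly when $F_\lambda^2$ is rigid, which is condition (ii) with $p$ constant.

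Next I would make the eigenvectors explicit. Because $\H(b)$ is $S$-invariant and $Y^*=S^*|_{\H(b)}$, while $\H_0(b)$ is $Y^*$-invariant so that $Y_0^*=Y^*|_{\H_0(b)}$, a point $z_0\in\T$ is an eigenvalue $\bar z_0$ of $Y_0^*$ exactly when the boundary kernel $k_{z_0}^b$ belongs to $\H(b)$ and is orthogonal to $\M(a)$, and generalized eigenvectors of order up to $n_j$ correspond to the derivatives $\partial_w^i k_w^b|_{w=z_j}$ for $0\le i<n_j$. Using Sarason's description of $\H(b)$ for nonextreme $b$—each $g\in\H(b)$ has a companion $g^+\in H^2$ with $T_{\bar b}g=T_{\bar a}g^+$ and $\langle g,h\rangle_{\H(b)}=\langle g,h\rangle_2+\langle g^+,h^+\rangle_2$—the two requirements (finiteness of the boundary kernel, and orthogonality to every $ak$) turn into pointwise conditions on $a$, $b$ and their derivatives at $z_j$, i.e.\ into a prescribed order of contact of $F_\lambda$ at $z_j$. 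The outcome I expect is the dictionary: $\bar z_j$ is an eigenvalue of $Y_0^*$ of algebraic multiplicity $n_j$ if and only if $(z-z_j)^{n_j}$ divides $F_\lambda$ in $H^2$.

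With this dictionary both implications follow from $\H_0(b)=\bigoplus_j\ker(Y_0^*-\bar z_j)^{n_j}$ and $N=\sum_j n_j$. For (i)$\Rightarrow$(ii) I would peel off one linear factor at a time: extracting a root $z_j\in\T$ of $F_\lambda$ corresponds to passing to a new special pair whose defect drops by exactly one, so after $N$ steps one reaches a pair whose range of $T_a$ is dense, i.e.\ whose outer function has rigid square; reassembling gives $F_\lambda=pf$ with $p=\prod_j(z-z_j)^{n_j}$ of degree $N$, all roots on $\T$, and $f^2$ rigid. Conversely, given such a factorization, the $n_j$ derivatives of the boundary kernel at each root $z_j$ produce $N$ independent generalized eigenvectors lying in $\H_0(b)$, while the rigidity of $f^2$ forbids any further boundary eigenvalue—this is the $N=0$ base case applied to the reduced pair—so the defect is exactly $N$.

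The main obstacle is the middle step: making rigorous the passage from ``$\bar z_j$ is an eigenvalue of algebraic multiplicity $n_j$'' to ``$(z-z_j)^{n_j}\mid F_\lambda$''. This requires controlling the order of contact of the reproducing kernels $k_w^b$ as $w\to z_j\in\T$ together with the behaviour of the companion map $g\mapsto g^+$, and then verifying that the inductive extraction of factors neither creates nor loses multiplicity and terminates precisely at the rigid case. I expect the rigidity characterization (rigidity of $F_\lambda^2$ as the obstruction to any boundary eigenvalue) to do the bookkeeping that closes the induction.
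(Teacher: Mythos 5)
You should first be aware that the paper itself contains no proof of this statement: it is quoted from Chapter X of Sarason's lecture notes \cite{Sarason4}, and the machinery the paper recalls (the operator $A_\lambda=S^*-F_\lambda(0)^{-1}(S^*F_\lambda\otimes 1)$, the intertwining $W_\lambda A_\lambda=Y^*W_\lambda$, Proposition 3, Lemma 1) is exactly the toolkit with which Sarason proves it. Your opening reduction to $\lambda=1$ is correct and harmless: $k_w^{\bar\lambda b}=k_w^b$, so $\H(\bar\lambda b)=\H(b)$ isometrically, $(\bar\lambda b,a)$ is a pair whose outer function $a/(1-\bar\lambda b)$ is $F_\lambda$, and it is special precisely when $\mu_\lambda$ is absolutely continuous; the $N=0$ case is then the rigidity criterion for density of $\M(a)$.

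The rest of the proposal, however, has a genuine gap, which you in fact flag yourself. The ``dictionary'' ($\bar z_j$ is an eigenvalue of $Y_0^*$ of algebraic multiplicity $n_j$ if and only if $F_\lambda(1-\bar z_j z)^{-n_j}\in H^2$), together with the bookkeeping that rigidity of $f^2$ excludes further spectrum, is not a technical middle step to be checked later: it \emph{is} the theorem, and it is precisely what Sarason's X-13 and X-14 (and, for one direction, the paper's Proposition 3) establish. You present it as ``the outcome I expect'' and then as ``the main obstacle,'' so nothing has actually been proved. Moreover, the route you propose toward it rests on a false identity. Since the inner product of $\H(b)$ is not that of $H^2$, the adjoint of $Y=S|_{\H(b)}$ is not $S^*|_{\H(b)}$: dualizing the paper's formula $X^*h=Sh-\langle h,S^*b\rangle b$ gives $Y^*h=S^*h+\langle h,b\rangle_{\H(b)}\,S^*b$, a rank-one perturbation of $S^*|_{\H(b)}$. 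This correction is the whole point: if $Y^*$ really were $S^*|_{\H(b)}$, it could have no unimodular eigenvalue at all, since an eigenvector would have to be a multiple of the Cauchy kernel $(1-\bar z_0z)^{-1}$, which is not in $H^2$, and the theorem would be vacuous; it is the rank-one term --- transported by $W_\lambda$ into the operator $A_\lambda$ --- that produces the boundary point spectrum spanned by the functions $F_\lambda(1-\bar z_0z)^{-j}$. Finally, your inductive scheme (``peeling off a root drops the defect by exactly one,'' and, conversely, rigidity of $f^2$ for the reduced pair forbids further eigenvalues of the original $Y_0^*$) requires comparing $\H_0(b)$ with the $\H_0$-space of a \emph{different} pair; no argument is given for this comparison, and it is essentially equivalent in difficulty to the statement being proved.
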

Our aim is to find  explicit description of  finite dimensional
spaces $\H_0(b)$.

For $w\in \D$ and a positive integer $n$ the function
$\frac{\partial^nk_{w}^b}{\partial \bar{w} ^n}$ is the kernel
function in $\H(b)$  for the functional of evaluation of the $n$-th
derivative at $w$, that is, for $f\in\H(b)$, we have
\[
f^{(n)}(w)= \biggl\langle f,\frac{\partial^nk_{w}^b}{\partial \bar{w}
^n}\biggr\rangle.
\]

For $n=0,1,2,\ldots$ set
\[
\upsilon^n_{b,w}(z)=\frac{\partial^nk_w^b}{\partial\bar{w}^n}(z),
\quad z,w\in\D.
\]
Our main result is the following.

\begin{theorem}
Assume that a point $z_0\in\partial\D$ and
for $\lambda\in\partial\D\setminus\{b(z_0)\}$ the measure
$\mu_{\lambda}$ is absolutely continuous. If  the function
$F_{\lambda}(1-\bar{z}_0 z)^{-k-1}$ is in $H^2$, then the space
$\ker(Y^*-\bar{z}_0)^k$ is spanned by $\upsilon^0_{b,z_0}$,
$\upsilon^1_{b,z_0}$,  \ldots, $\upsilon^k_{b,z_0}$ which are the
limits of $\upsilon^0_{b,w}$, $\upsilon^1_{b,w}$, \ldots,
$\upsilon^k_{b,w}$ as $w$ tends nontangentially to $z_0$.
\end{theorem}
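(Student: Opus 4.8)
The plan is to exploit the fact that each kernel function $\upsilon^n_{b,w}$ sits in a Jordan chain for $Y^*$, and then to push these chains to the boundary point $z_0$ by a norm-convergence argument. First I would record the basic eigenrelation: since $Y=S|_{\H(b)}$ is multiplication by $z$ and $k^b_w$ is the reproducing kernel, for every $f\in\H(b)$ one has $\langle f,Y^*k^b_w\rangle=\langle zf,k^b_w\rangle=wf(w)=\langle f,\bar w\,k^b_w\rangle$, so that $Y^*\upsilon^0_{b,w}=\bar w\,\upsilon^0_{b,w}$. Differentiating this identity $n$ times with respect to $\bar w$ (legitimate because $k^b_w$ is holomorphic in $\bar w$ and $Y^*$ is a fixed bounded operator) and using $\partial_{\bar w}^2\bar w=0$ gives the chain relations
\[
(Y^*-\bar w)\,\upsilon^n_{b,w}=n\,\upsilon^{n-1}_{b,w},\qquad n\ge 0 .
\]
In particular $(Y^*-\bar w)^{n+1}\upsilon^n_{b,w}=0$ while $(Y^*-\bar w)^{n}\upsilon^n_{b,w}=n!\,\upsilon^0_{b,w}\neq 0$, so for each interior $w$ the functions $\upsilon^0_{b,w},\dots,\upsilon^{k}_{b,w}$ form a Jordan chain and are therefore linearly independent.

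The second step is to show that the nontangential limits $\upsilon^n_{b,z_0}=\lim_{w\to z_0}\upsilon^n_{b,w}$ exist in the norm of $\H(b)$ for $0\le n\le k$, and this is where the hypothesis $F_{\lambda}(1-\bar z_0z)^{-k-1}\in H^2$ enters. The Gram entries can be written as mixed derivatives of the kernel, $\langle\upsilon^n_{b,w},\upsilon^m_{b,v}\rangle=\partial_z^m\partial_{\bar w}^n k^b_w(z)\big|_{z=v}$ (by applying the $m$-th derivative functional to $\upsilon^n_{b,w}$), so controlling the limits amounts to controlling boundary derivatives of $k^b$. I would convert this into an integrability statement by passing through the measure $\mu_{\lambda}$ for a fixed $\lambda\neq b(z_0)$, where $\mu_{\lambda}$ is absolutely continuous with density $|F_{\lambda}|^2$: the membership $F_{\lambda}(1-\bar z_0z)^{-k-1}\in H^2$ is exactly the condition that the relevant Cauchy-type kernels and their first $k$ derivatives lie in $L^2(\mu_{\lambda})$, which bounds $\|\upsilon^n_{b,w}\|$ uniformly along nontangential approach and yields convergence of the whole Gram matrix. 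Once norm convergence is secured, the chain relations survive in the limit by continuity of $Y^*$,
\[
(Y^*-\bar z_0)\,\upsilon^n_{b,z_0}=n\,\upsilon^{n-1}_{b,z_0},\qquad 0\le n\le k,
\]
so $\upsilon^0_{b,z_0},\dots,\upsilon^{k}_{b,z_0}$ is again a linearly independent Jordan chain lying in the generalized eigenspace of $Y^*$ at $\bar z_0$; this gives the inclusion of $\operatorname{span}\{\upsilon^0_{b,z_0},\dots,\upsilon^{k}_{b,z_0}\}$ in the kernel appearing in the statement.

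For the reverse inclusion I would pin down the dimension. Rewriting the kernel as an orthogonal-complement condition,
\[
\ker(Y^*-\bar z_0)^{k}=\bigl((Y-z_0)^{k}\H(b)\bigr)^{\perp}=\bigl((z-z_0)^{k}\H(b)\bigr)^{\perp},
\]
reduces the claim to a statement about how much of $\H(b)$ is annihilated by multiplication by $(z-z_0)^{k}$. I would then invoke Sarason's characterization quoted above: the algebraic multiplicity of the eigenvalue $\bar z_0$ of $Y^*_0$ equals the order to which the polynomial factor $p$ in $F_{\lambda}=pf$ vanishes at $z_0$, and the hypothesis $F_{\lambda}(1-\bar z_0z)^{-k-1}\in H^2$ fixes this order to be precisely the number of kernel functions produced in the first two steps. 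Matching this multiplicity against the length of the Jordan chain already constructed forces equality, and the spanning assertion follows.

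The main obstacle is the second step: establishing $\H(b)$-norm convergence of the kernel derivatives as $w\to z_0$ nontangentially and identifying precisely the integrability encoded by $F_{\lambda}(1-\bar z_0z)^{-k-1}\in H^2$. The delicate points are that one must work with a measure $\mu_{\lambda}$ other than $\mu$ (whence the exclusion $\lambda\neq b(z_0)$, which keeps $1-\bar\lambda b$ bounded away from $0$ near $z_0$) and that nontangential, rather than merely radial, approach is needed to guarantee simultaneous convergence of all the off-diagonal Gram entries.
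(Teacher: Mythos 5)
Your opening step is correct and is cleaner than anything made explicit in the paper: from $Y^*k^b_w=\bar w\,k^b_w$ the differentiated chain relations $(Y^*-\bar w)\upsilon^n_{b,w}=n\,\upsilon^{n-1}_{b,w}$ do follow, and they pass to any (weak or norm) limit since $Y^*$ is bounded. Note, however, that your own relations give $(Y^*-\bar z_0)^k\upsilon^k_{b,z_0}=k!\,\upsilon^0_{b,z_0}\neq 0$, so what the chain actually yields is $\operatorname{span}\{\upsilon^0_{b,z_0},\dots,\upsilon^k_{b,z_0}\}\subseteq\ker(Y^*-\bar z_0)^{k+1}$, \emph{not} the inclusion into $\ker(Y^*-\bar z_0)^{k}$ that you claim; this exposes an off-by-one in the statement itself (the paper's proof in fact computes $\ker(A_\lambda-\bar z_0)^{k+1}$ and hence $\ker(Y^*-\bar z_0)^{k+1}$). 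The first genuine gap is your second step: the existence of the limits $\upsilon^n_{b,z_0}$ is the technical heart of the theorem, and you only assert it. Uniform boundedness of $\|\upsilon^n_{b,w}\|$ does not give norm convergence, and you never derive from $F_\lambda(1-\bar z_0z)^{-k-1}\in H^2$ that the Gram entries $\langle\upsilon^n_{b,w},\upsilon^m_{b,v}\rangle$ converge to a common limit. The paper spends most of its effort exactly here: it proves Sarason's equivalences (boundedness of the norms of $\partial^{k}k^b_w/\partial\bar w^{k}$, existence of nontangential limits of derivatives up to order $k$, and $F_\lambda(1-\bar z_0z)^{-k-1}\in H^2$) by induction using Propositions 1 and 2, and then writes $\upsilon^m_{b,w}=W_\lambda(h_w)$ with $h_w$ an explicit combination of the functions $z^jF_\lambda(1-\bar wz)^{-j-1}$ with coefficients involving $\overline{b^{(j)}(w)}$, obtains $h_w\to h$ in $H^2$ by dominated convergence, and transfers this through the isometry $W_\lambda$.

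The second, more serious gap is your reverse inclusion. Theorem A is an equivalence between \emph{finite} codimension of $\overline{\M(a)}$ in $\H(b)$ and the factorization $F_\lambda=pf$ with $p$ a polynomial whose roots all lie on $\partial\D$ and $f^2$ rigid. The hypothesis $F_\lambda(1-\bar z_0z)^{-k-1}\in H^2$ implies neither the rigidity of the square of any cofactor nor the finite dimensionality of $\H_0(b)$; when $\H_0(b)$ is infinite dimensional (which the hypotheses allow), Theorem A gives no information about a ``multiplicity at $\bar z_0$'', and the matching argument collapses. Even in the finite-dimensional case you would additionally need that the Jordan structure of $Y_0^*$ at $\bar z_0$ is a single chain (geometric multiplicity one), which you never establish. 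The paper's route avoids this entirely and is purely local: by \cite[X-14]{Sarason4}, the hypothesis implies that $\ker(A_\lambda-\bar z_0)^{k+1}$ is \emph{exactly} the span of $F_\lambda(1-\bar z_0z)^{-j}$, $j=1,\dots,k+1$, where $A_\lambda=S^*-F_\lambda(0)^{-1}(S^*F_\lambda\otimes 1)$ satisfies the intertwining relation $W_\lambda A_\lambda=Y^*W_\lambda$; the paper's Lemma 1 then gives $\ker(Y^*-\bar z_0)^{k+1}=W_\lambda\ker(A_\lambda-\bar z_0)^{k+1}$, so the dimension and the spanning come simultaneously, and the only remaining work is to identify $W_\lambda\bigl(F_\lambda(1-\bar z_0z)^{-m-1}\bigr)$ as a linear combination of $\upsilon^0_{b,z_0},\dots,\upsilon^m_{b,z_0}$ by the limit argument above. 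You need some substitute for this intertwining mechanism (or for the paper's Proposition 3) before your dimension count can close the proof.
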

We mention that this theorem for $k=1$  has been proved in
\cite{NSSW}.

\section{Preliminaries}
In this section we collect auxiliary
results on the space $\H(b)$ generated by a nonextreme $b$ that we
will use in our proofs. Let $X$ denote the restriction of the
operator $S^*$ to $\H(b)$. Then the adjoint operator $X^*$
is given by
\begin{equation}
X^*h= Sh-\langle h, S^*b\rangle b, \qquad \text{(see
\cite[p. 61]{Sarason4}, \cite[Theorem 18.22]{FM}).}\label{adjoint}
\end{equation}
Moreover,  the
following formula for the  reproducing kernel $k_w^b$
was given in\cite{Sarason0} (see also
Theorem 18.21 in \cite{FM})
\begin{equation}
k_w^b=(1-\bar wX^*)^{-1}k_0^b.
\label{kernel1}
\end{equation}
Using this formula we derive the following.
\begin{prop}
If $\frac{\partial^nk_{w}^b}{\partial \bar{w} ^n}$   are bounded in the norm as $w$ tends nontangentially to
$z_0\in\partial\D$, then also the norms of
$\frac{\partial^mk_{w}^b}{\partial \bar{w}^m}$, $m=0,1, \dots ,n-1,
$ stay bounded as $w$ tends nontangentially to $z_0$.\label{bound}
\end{prop}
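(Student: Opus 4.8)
The plan is to regard the reproducing kernel and all of its $\bar{w}$-derivatives as the successive derivatives of one $\H(b)$-valued holomorphic function of the single variable $u=\bar w$, and then to recover a lower-order derivative from the next higher one by integration, the boundedness of the integrand being exactly the hypothesis. Concretely, set $u=\bar w$ and put $K(u)=(1-uX^*)^{-1}k_0^b$. By \eqref{kernel1} this equals $k_w^b$. Because \eqref{kernel1} defines $k_w^b$ for every $w\in\D$, the resolvent $(1-uX^*)^{-1}$ exists and depends holomorphically on $u\in\D$, so $u\mapsto K(u)$ is an $\H(b)$-valued holomorphic function on $\D$. First I would identify its $m$-th norm-derivative with $\upsilon^m_{b,w}$: differentiating the resolvent gives $K^{(m)}(u)=m!\,(1-uX^*)^{-(m+1)}(X^*)^m k_0^b\in\H(b)$, and pairing with the bounded functional $k_z^b$ yields $K^{(m)}(u)(z)=\bigl\langle K^{(m)}(u),k_z^b\bigr\rangle=\frac{\partial^m k_w^b}{\partial\bar w^m}(z)=\upsilon^m_{b,w}(z)$. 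Thus $\upsilon^m_{b,w}=K^{(m)}(u)$ as elements of $\H(b)$, and in particular $\frac{d}{du}\upsilon^{m-1}_{b,w}=\upsilon^m_{b,w}$ in the norm of $\H(b)$.

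With this in hand the argument reduces to one downward induction step together with the fundamental theorem of calculus for Banach-space valued holomorphic functions. Fix a Stolz angle $\Gamma$ with vertex $z_0$; conjugation carries it to a Stolz angle $\Gamma'$ with vertex $\bar z_0$ in the $u$-plane (these regions are convex). By hypothesis, together with continuity on the compact part of $\Gamma'$, there is a constant $M$ with $\|\upsilon^n_{b,w}\|\le M$ for all $u=\bar w\in\Gamma'$. Fix an interior base point $u_0=\bar w_0\in\Gamma'$. For any $u\in\Gamma'$ the segment $[u_0,u]$ stays in $\Gamma'$ by convexity, so integrating $\frac{d}{du}\upsilon^{n-1}_{b,w}=K^{(n)}(u)$ along it gives
\[
\upsilon^{n-1}_{b,w}=\upsilon^{n-1}_{b,w_0}+\int_{u_0}^{u}K^{(n)}(s)\,ds,
\qquad
\bigl\|\upsilon^{n-1}_{b,w}\bigr\|\le\bigl\|\upsilon^{n-1}_{b,w_0}\bigr\|+|u-u_0|\,M\le\bigl\|\upsilon^{n-1}_{b,w_0}\bigr\|+2M,
\]
where the finite constant $\|\upsilon^{n-1}_{b,w_0}\|$ comes from the fixed interior point. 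Hence $\upsilon^{n-1}_{b,w}$ stays bounded on $\Gamma'$, i.e.\ as $w\to z_0$ nontangentially. Repeating the estimate with $n$ replaced by $n-1,n-2,\ldots,1$ yields boundedness of $\upsilon^{n-1}_{b,w},\dots,\upsilon^{0}_{b,w}$, which is the assertion.

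The routine verifications (holomorphy of the resolvent, term-by-term differentiation, and the identification of the pointwise and norm derivatives) are standard for reproducing kernel Hilbert spaces. The only point requiring care, and the one I would regard as the crux, is keeping the path of integration inside the region where the bound on $\upsilon^n_{b,w}$ is available. This is precisely why I work with convex approach regions: the straight segment from the fixed base point $u_0$ to $u$ never leaves $\Gamma'$, so the uniform bound $M$ applies along the whole path and the length factor $|u-u_0|$ is controlled by the diameter of $\D$. Once convexity of $\Gamma'$ is used, the estimate closes uniformly and the downward induction proceeds without further obstacle.
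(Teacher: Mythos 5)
Your proof is correct, but it follows a genuinely different route from the paper's. Both arguments start from the same place, formula \eqref{kernel1}, which gives $\upsilon^m_{b,w}=m!\,(1-\bar wX^*)^{-m-1}X^{*m}k_0^b$; they diverge at the key step of recovering the $(n-1)$-st derivative from the $n$-th. The paper stays inside the operator calculus of $\H(b)$: it observes that $X^*\upsilon^{n-1}_{b,w}$ is, up to the uniformly bounded factor $(1-\bar wX^*)$, controlled by $\upsilon^n_{b,w}$, and then reconstructs the full norm via the identity $\|X^*h\|^2=\|h\|^2-|\langle h,S^*b\rangle|^2$, the extra term being handled by Cauchy--Schwarz against the fixed vector $b$. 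This identity is special to the nonextreme situation (it uses $b\in\H(b)$, $S^*b\in\H(b)$) and yields an exact norm decomposition rather than an estimate. You instead treat $u\mapsto(1-uX^*)^{-1}k_0^b$ as an $\H(b)$-valued holomorphic function and integrate the $n$-th derivative along segments inside a convex approach region --- a soft, purely function-theoretic argument that uses nothing about $\H(b)$ beyond norm-holomorphy of the kernel in $\bar w$; it would work verbatim for any Banach-space-valued holomorphic function on $\D$, and in particular does not need $b$ to be nonextreme. What the paper's method buys is that it never leaves the resolvent/adjoint formalism the rest of the paper runs on; what yours buys is generality and geometric transparency.

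One small point you assert without proof, and which genuinely needs a line because you yourself call it the crux: convexity of the approach regions. With the paper's definition $S_C(z_0)=\{z\in\D:|z-z_0|\le C(1-|z|)\}$ this is true and easy --- the condition reads $|z-z_0|+C|z|\le C$, a sublevel set of the convex function $z\mapsto|z-z_0|+C|z|$, so $S_C(z_0)$ is convex (and conjugation clearly carries $S_C(z_0)$ onto $S_C(\bar z_0)$). With that line added, your induction closes: the hypothesis plus norm-continuity of $w\mapsto\upsilon^n_{b,w}$ on the compact part of the region gives the uniform bound $M$, the segment from the fixed base point stays in the region, and the fundamental theorem of calculus gives $\|\upsilon^{n-1}_{b,w}\|\le\|\upsilon^{n-1}_{b,w_0}\|+2M$, after which the downward iteration is immediate.
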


\begin{proof}
Formula (\ref{kernel1}) implies that for $n=1,2,\dots$
\[
\frac{\partial^nk_{w}^b}{\partial \bar{w} ^n}= n!(1-\bar
wX^*)^{-n-1}X^{*n}k_0^b.
\]
Since
\[
(1-\bar wX^*)^{-n}X^{*n}k_0^b=(1-\bar{w}X^*)(1-\bar
wX^*)^{-n-1}X^{*n}k_0^b,
\]
the boundedness  of $(1-\bar{w}X^*)^{-n-1}X^{*n}k_0^b$ implies the
boundedness of $(1-\bar{w}X^*)^{-n}X^{*n}k_0^b$. To show that
\[
\frac{\partial^{n-1}k_{w}^b}{\partial \bar w ^{n-1}}=(n-1)!(1-\bar{w}X^*)^{-n}X^{* (n-1)}k_0^b
\]
is bounded we observe that
\[
X^*\frac{\partial^{n-1}k_{w}^b}{\partial \bar{w} ^{n-1}}=
(n-1)!(1-\bar{w}X^*)^{-n}X^{* n}k_0^b. \] It follows from
(\ref{adjoint}) that
\[
\|X^*f\|^2=\|f\|^2-|\langle f, S^*b\rangle|^2 \qquad \text{ (see
Corollary 18.23 in \cite{FM})}
\]
We  know that if $b$ is
nonextreme, then $b\in \H(b)$. Thus
\begin{align*}
\left\| \frac1{(n-1)!}\frac{\partial^{n-1}k_{w}^b}{\partial \bar{w}
^{n-1}}\right\|^2&=\|(1-\bar{w}X^*)^{-n}X^{*n}k_0^b\|^2+|\langle(1-\bar{w}X^*)^{-n}X^{*(n-1)}k_0^b,
S^*b \rangle|^2\\ &=  \|(1-\bar{w}X^*)^{-n}X^{*n}k_0^b\|^2+|\langle(1-\bar{w}X^*)^{-n-1}k_0^b, X^{n}b
\rangle|^2\\ \noalign{\medskip}
&=\|(1-\bar{w}X^*)^{-n}X^{*n}k_0^b\|^2+|\langle(1-\bar{w}X^*)^{-n-1}X^{*n}k_0^b, b
\rangle|^2.
\end{align*}
\end{proof}

The next proposition was actually stated in \cite[pp. 58--59]{Sarason4}
without proof.

\begin{prop}
Let  $z_0\in \partial \D$ and  $b$
be a nonextreme function from the unit ball of $H^{\infty}$. If
every function in $\H(b)$ and all of its derivatives up to
order $n$ have nontangential limits at $z_0$, then also  $b^{(n+1)}$
has a nontangential limit at $z_0$.
\end{prop}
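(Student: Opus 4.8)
The plan is to reduce the statement to a single strong-convergence assertion for difference quotients of $b$, via an exact formula that writes $b^{(n+1)}(w)$ as an inner product in $\H(b)$. First I would exploit the hypothesis through the uniform boundedness principle. For each $m\le n$ and each $f\in\H(b)$ the scalar $f^{(m)}(w)=\langle f,\upsilon^m_{b,w}\rangle$ converges as $w\to z_0$ nontangentially, hence is bounded; Banach--Steinhaus then gives $\sup\|\upsilon^m_{b,w}\|<\infty$ along the approach and, the limiting functional $f\mapsto\lim f^{(m)}(w)$ being bounded, a weak limit $\upsilon^m_{b,z_0}\in\H(b)$ with $\upsilon^m_{b,w}\to\upsilon^m_{b,z_0}$ weakly. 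Applying the order-$0$ conclusion to $b,a\in\H(b)$ (note $a\in aH^2=\M(a)\subseteq\H(b)$) yields nontangential limits $b(z_0),a(z_0)$; boundedness of $\|k_w^b\|^2=(1-|b(w)|^2)/(1-|w|^2)$ forces $|b(z_0)|=1$, and since $a$ is outer with $|a|^2=1-|b|^2$ a.e. one gets $a(z_0)=0$.

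Next I would establish the exact identity $b^{(n+1)}(w)=(n+1)\langle\phi_w,\upsilon^n_{b,w}\rangle$, where $\phi_w(z)=\frac{b(z)-b(w)}{z-w}$ is the difference quotient $Q_wb$. Here I use the resolvent form $\upsilon^n_{b,w}=n!\,X^{*n}(1-\bar wX^*)^{-n-1}k_0^b$ coming from (\ref{kernel1}), which gives $\upsilon^{n+1}_{b,w}=(n+1)(1-\bar wX^*)^{-1}X^*\upsilon^n_{b,w}$; moving the operators across the inner product in $b^{(n+1)}(w)=\langle b,\upsilon^{n+1}_{b,w}\rangle$ and using $b\in\H(b)$ together with $X=S^*\!\mid_{\H(b)}$ identifies $X(1-wX)^{-1}b$ with $\phi_w$. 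One checks the identity directly for small $n$: for $n=0$ it reads $\langle\phi_w,k_w^b\rangle=\phi_w(w)=b'(w)$.

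The whole problem then reduces to showing $\phi_w\to\phi_{z_0}$ strongly in $\H(b)$, where $\phi_{z_0}(z)=\frac{b(z)-b(z_0)}{z-z_0}$: combined with the weak convergence $\upsilon^n_{b,w}\to\upsilon^n_{b,z_0}$, the weak--strong pairing gives $b^{(n+1)}(w)\to(n+1)\langle\phi_{z_0},\upsilon^n_{b,z_0}\rangle$, which is the assertion. To set up the strong convergence I would first compute the norm in closed form. Writing $\phi_w=Q_wb$ and using Sarason's description of the $\H(b)$-norm, the companion function turns out to be $h^{+}=-Q_wa$ (this is where $|a|^2+|b|^2=1$ enters, through $T_{\bar b}Q_wb=T_{\bar a}(-Q_wa)$), so that $\|\phi_w\|_{\H(b)}^2=\|Q_wb\|_2^2+\|Q_wa\|_2^2$; a Poisson-kernel computation, again using $|a|^2+|b|^2=1$, collapses this to
\[
\|\phi_w\|_{\H(b)}^2=\frac{1-|a(w)|^2-|b(w)|^2}{1-|w|^2}.
\]
Since $|a|^2+|b|^2$ is subharmonic with boundary value $1$, the numerator is nonnegative and $\|\phi_w\|^2\le\|k_w^b\|^2$ stays bounded; together with the pointwise limit $\phi_w\to\phi_{z_0}$ on $\D$ (immediate from $b(w)\to b(z_0)$) this yields weak convergence and $\phi_{z_0}\in\H(b)$.

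The remaining step, and in my view the main obstacle, is the norm convergence $\|\phi_w\|^2\to\|\phi_{z_0}\|^2$ required to upgrade weak to strong convergence. By the explicit formula this is exactly the assertion that the quotient $(1-|a(w)|^2-|b(w)|^2)/(1-|w|^2)$ has a nontangential limit, equivalently that the local Dirichlet-type integrals $\|Q_wb\|_2^2$ and $\|Q_wa\|_2^2$ converge as $w\to z_0$. I would control these using the boundary regularity of $a,b$ furnished by the order-$0$ data (finiteness of the Julia quotient, $|b(z_0)|=1$, $a(z_0)=0$) and the integral representation of $\|Q_wg\|_2^2$; once the limit of $\|\phi_w\|^2$ is in hand, $\|\phi_w-\phi_{z_0}\|^2=\|\phi_w\|^2-2\re\langle\phi_w,\phi_{z_0}\rangle+\|\phi_{z_0}\|^2\to0$ completes the argument. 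This recovers the known $n=0$ case and hence is consistent with the $k=1$ result of \cite{NSSW}.
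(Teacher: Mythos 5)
Your reduction is sound as far as it goes: the identity $b^{(n+1)}(w)=(n+1)\langle \phi_w,\upsilon^n_{b,w}\rangle$ with $\phi_w(z)=\frac{b(z)-b(w)}{z-w}$ is just the Taylor--coefficient computation $\langle\phi_w,\upsilon^n_{b,w}\rangle=\phi_w^{(n)}(w)=b^{(n+1)}(w)/(n+1)$; the weak convergence of $\upsilon^n_{b,w}$ is exactly what Banach--Steinhaus gives from the hypothesis; and your norm formula $\|\phi_w\|^2_{\H(b)}=\bigl(1-|a(w)|^2-|b(w)|^2\bigr)/(1-|w|^2)$ is correct (the companion function is indeed $-\frac{a-a(w)}{z-w}$). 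The genuine gap is at the step you yourself call the main obstacle, and your stated plan for closing it would not close it. You claim strong convergence of $\phi_w$ is ``exactly the assertion that $(1-|a(w)|^2-|b(w)|^2)/(1-|w|^2)$ has a nontangential limit.'' That is not equivalent: weak convergence plus convergence of norms to \emph{some} value yields strong convergence only if that value equals $\|\phi_{z_0}\|^2$. One can compute what is actually needed: since $|b(z_0)|=1$, the algebraic identity $\frac{b(z)-b(z_0)}{z-z_0}=\bar z_0 b(z_0)k^b_{z_0}(z)$ shows $\|\phi_{z_0}\|^2=\|k^b_{z_0}\|^2=|b'(z_0)|$, while $\|\phi_w\|^2=\frac{1-|b(w)|^2}{1-|w|^2}-\frac{|a(w)|^2}{1-|w|^2}$ and the first quotient already tends to $|b'(z_0)|$ by Julia--Carath\'eodory. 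Hence what must be proved is the specific limit $\frac{|a(w)|^2}{1-|w|^2}\to 0$ nontangentially, and nothing in your sketch produces it; mere existence of a limit is strictly weaker. (The fact is true and can be proved: the bound $\int P_w|a|^2\,dm\le 1-|b(w)|^2\le M(1-|w|^2)$ and Fatou give $\int_{\partial\D}|a|^2|e^{i\theta}-z_0|^{-2}\,dm<\infty$, hence $a/(z-z_0)\in H^2$ by Smirnov's theorem, and then $(1-|w|^2)|g(w)|^2\to0$ for $g\in H^2$ together with the Stolz geometry $|w-z_0|\le C(1-|w|)$ yields the claim --- but this argument, the crux of your whole approach, is absent from the proposal.)

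You should also know that the paper's proof sidesteps all of this Hilbert-space analysis and is far more elementary. It uses the $n=0$ theory (Sarason, VI-4) only to conclude that the single function $h(z)=\frac{b(z)-b(z_0)}{z-z_0}$ belongs to $\H(b)$, so that by hypothesis $h^{(n)}$ has a nontangential limit $\lambda$ at $z_0$; a Leibniz-formula manipulation then gives, on small circles $\gamma_z$ of radius $\tfrac12(1-|z|)$ inside a Stolz domain,
\[
b^{(n)}(\zeta)=n\,h^{(n-1)}(\zeta)+\lambda(\zeta-z_0)+\beta(\zeta)(\zeta-z_0),
\]
with $\beta$ uniformly small near $z_0$, and Cauchy's formula $b^{(n+1)}(z)=\frac1{2\pi i}\int_{\gamma_z}\frac{b^{(n)}(\zeta)}{(\zeta-z)^2}\,d\zeta$ yields $b^{(n+1)}(z)\to(n+1)\lambda$ nontangentially. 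No kernel asymptotics, no use of the pair $(b,a)$, and no strong convergence of difference quotients are required. If you wish to salvage your route, you must supply the missing limit $\frac{|a(w)|^2}{1-|w|^2}\to0$; as written, the argument is incomplete at its central step.
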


\begin{proof}
The case $n=0$ is contained in  \cite[VI-4]{Sarason4}
Assume that   every
function in $\H(b)$ and all of its derivatives up to order
$n$  have nontangential limits at $z_0$. It follows from
\cite[VI-4]{Sarason4} that then the function
$h(z)=\frac{b(z)-b(z_0)}{z-z_0}$ is in $\H(b)$. Thus there
exists the limit
\[
\lim_{\substack{z\to z_0 \\
\angle}}\left(\frac{b(z)-b(z_0)}{z-z_0}\right)^{(n)}=\lambda.
\]
For $C\geq 1$, let the Stolz domain $S_{C}(z_0)$ be  defined by
\[
S_{C}(z_0)=\{z\in\D: |z-z_0|\leq C(1-|z|)\},
\]
and for an  $\varepsilon>0$, put
\[
\alpha_n(\varepsilon)=\sup\left\{\left|\left(\frac{b(z)-b(z_0)}{z-z_0}\right)^{(n)}-\lambda\right|:
z\in S_{2C+1}(z_0), |z-z_0|<\varepsilon\right\},
 \]
then clearly
$\alpha_n(\varepsilon)$ tends to zero with $\varepsilon$.

Next let $\gamma_z$ denote the circle with center $z$ and radius
$\frac12(1-|z|)$. Then for $z\in S_C(z_0)$,
$\gamma_z$ lies in $S_{2C+1}(z_0)$.
The Leibniz formula
\[
\left(\frac{b(z)-b(z_0)}{z-z_0}\right)^{(n)}=\sum_{k=0}^{n}\binom
nk\left(b(z)-b(z_0)\right)^{(n-k)}\frac{(-1)^kk!}{(z-z_0)^{k+1}}
\]
implies that for $\zeta \in\gamma_z$,
\[
b^{(n)}(\zeta)=\sum_{k=1}^{n}\binom
nk\frac{(-1)^{k-1}k!(b(\zeta)-b(z_0))^{(n-k)}}{(\zeta-z_0)^{k}} +
\lambda (\zeta-z_0)+\beta(\zeta)(\zeta-z_0),
\]
where
$|\beta(\zeta)|\leq\alpha_n(|\zeta- z_0|)\leq\alpha_n\left(\frac 32|z-
z_0|\right)$. Using the equality  $k\binom n{k}=n\binom{n-1}{k-1}$,
we obtain
\begin{align*}
b^{(n)}(\zeta) &=\sum_{k=1}^{n}k\binom n k
(b(\zeta)-b(z_0))^{(n-k)}\left(\frac{1}{\zeta-z_0}\right)^{(k-1)} +
\lambda
(\zeta-z_0)+\beta(\zeta)(\zeta-z_0)\\
&=\sum_{k=1}^{n}n\binom
{n-1}{k-1}(b(\zeta)-b(z_0))^{(n-k)}\left(\frac{1}{\zeta-z_0}\right)^{(k-1)}
+ \lambda
(\zeta-z_0)+\beta(\zeta)(\zeta-z_0)\\
&=n\sum_{k=0}^{n-1}\binom
{n-1}{k}(b(\zeta)-b(z_0))^{(n-1-k)}\left(\frac{1}{\zeta-z_0}\right)^{(k)}
+ \lambda
(\zeta-z_0)+\beta(\zeta)(\zeta-z_0)\\
&=n\left(\frac{b(\zeta)-b(z_0)}{\zeta-z_0}\right)^{(n-1)} + \lambda
(\zeta-z_0)+\beta(\zeta)(\zeta-z_0).
\end{align*}
Finally, since
\[
b^{(n+1)}(z)=\frac1{2\pi
i}\int_{\gamma_z}\frac{b^{(n)}(\zeta)}{(\zeta-
 z)^2}d\zeta,
\]
we get
\[
\lim_{\substack{z\to z_0\\z\in S_{2C+1}(z_0)}}b^{(n+1)}(z)=(n+1)\lambda.
\]
\end{proof}

For $\lambda\in\partial\D$  set
$W_{\lambda}=T_{1-\overline{\lambda}b}T_{\overline {F}_{\lambda}}$
and recall that if $\mu_{\lambda}$ is absolutely continuous then
$W_{\lambda}$ is an isometry of $H^2$ onto $\H(b)$. In \cite
{Sarason4} the structure of finite dimensional spaces $\H_0(b)$ has
been studied by means of an operator
$A_\lambda$ on $H^2$. Under the assumption that $\mu_{\lambda}$ is absolutely continuous, $A_\lambda$
intertwines
$W_{\lambda} $  with the operator $Y^\ast$ i.e.,
\begin{equation}
W_{\lambda}A_\lambda=Y^* W_{\lambda}.\label{inter}
\end{equation}
The operator $A_\lambda$ is given by
\begin{equation}
A_\lambda=S^*-F_\lambda(0)^{-1}(S^* F_\lambda\otimes 1).
\label{al}\end{equation}
 Moreover,  it has been showed in \cite[X-14]{Sarason4}
 that under above assumptions, if for $z_0\in\partial \D$ the function $\frac{F_{\lambda}}{(1-\bar{z}_0z)^{k}}\in H^{2}$, then  the kernel of
$(A_{\lambda}-\bar{z}_0)^k$ is spanned by
$(1-\bar{z}_0z)^{-1}F_{\lambda},
(1-\bar{z}_0z)^{-2}F_{\lambda},\dots,
(1-\bar{z}_0z)^{-k}F_{\lambda}$. It turns out the the inverse
statement is also true and we have

\begin{prop}
Assume that $z_0\in\partial\D$ and $\lambda\in
\partial\D$ is such that $\mu_{\lambda}$ is absolutely continuous.
Then
\[
\dim\ker(A_{\lambda}-\bar{z_0})^k= k \iff
F_{\lambda}(1-\bar{z_0}z)^{-k}\in H^2.
\]
\end{prop}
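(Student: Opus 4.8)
The plan is to exploit that $A_\lambda$ is a rank-one perturbation of the backward shift and to solve the generalized eigenvalue equation explicitly. Writing out \eqref{al}, for $h\in H^2$ one has $A_\lambda h=S^*h-F_\lambda(0)^{-1}h(0)\,S^*F_\lambda$, where $F_\lambda(0)\neq0$ because $F_\lambda$ is outer. Since \cite[X-14]{Sarason4} already supplies the implication that $F_\lambda(1-\bar z_0 z)^{-k}\in H^2$ forces $\dim\ker(A_\lambda-\bar z_0)^k=k$ (the listed functions being linearly independent and spanning the kernel), the genuinely new content is the converse. I would obtain it from a single structural inclusion for the kernel, derived from an explicit resolvent-type formula.

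First I would solve the inhomogeneous equation $(A_\lambda-\bar z_0)g=h$ for $g\in H^2$. Substituting the formula for $A_\lambda$, using $S^*u(z)=(u(z)-u(0))/z$, multiplying through by $z$, and writing $c:=F_\lambda(0)^{-1}g(0)$ so that $g(0)=cF_\lambda(0)$, all terms involving $g(0)$ and $F_\lambda(0)$ cancel and one is left with the clean identity $g(z)(1-\bar z_0 z)=z\,h(z)+c\,F_\lambda(z)$, i.e. $g=(zh+cF_\lambda)(1-\bar z_0 z)^{-1}$. The role of the rank-one term is precisely to furnish the summand $cF_\lambda$ that removes the obstruction to solvability arising from $\bar z_0\in\partial\D$ sitting on the boundary of the spectrum of $S^*$, so that $S^*-\bar z_0$ is injective but not onto.

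With this identity in hand I would prove, by induction on $k$, the inclusion $\ker(A_\lambda-\bar z_0)^k\subseteq\{F_\lambda P\,(1-\bar z_0 z)^{-k}:\deg P\le k-1\}$. The base case $k=1$ is the formula above with $h=0$, giving $g=c\,F_\lambda(1-\bar z_0 z)^{-1}$. For the inductive step, put $h=(A_\lambda-\bar z_0)g\in\ker(A_\lambda-\bar z_0)^{k-1}$; by the inductive hypothesis $h=F_\lambda Q(1-\bar z_0 z)^{-(k-1)}$ with $\deg Q\le k-2$, and feeding this into $g=(zh+cF_\lambda)(1-\bar z_0 z)^{-1}$ yields $g=F_\lambda P(1-\bar z_0 z)^{-k}$ with $P=zQ+c(1-\bar z_0 z)^{k-1}$, so that $\deg P\le k-1$.

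Finally I would read off the equivalence. Since $F_\lambda\not\equiv0$, the functions $F_\lambda(1-\bar z_0 z)^{-m}$, $m=1,\dots,k$, are linearly independent, so the right-hand side above is a space of (formal) functions of dimension exactly $k$; the inclusion therefore gives $\dim\ker(A_\lambda-\bar z_0)^k\le k$ unconditionally. If the dimension equals $k$, the inclusion forces the kernel to coincide with this entire $k$-dimensional span, whence each $F_\lambda(1-\bar z_0 z)^{-m}$, $m\le k$, lies in $H^2$, and in particular $F_\lambda(1-\bar z_0 z)^{-k}\in H^2$. Combined with the reverse implication from \cite[X-14]{Sarason4} this yields the stated equivalence. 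I expect the main obstacle to be the derivation and justification of the explicit solution formula—keeping track of the rank-one correction and verifying that the resulting $g$ genuinely lies in $H^2$—since everything afterward is linear-algebraic bookkeeping once that identity is secured.
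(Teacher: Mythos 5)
Your proof is correct and follows essentially the same route as the paper: the identical rank-one-perturbation computation yields the key identity $(1-\bar z_0 z)g = zh + F_\lambda(0)^{-1}g(0)F_\lambda$, followed by the same induction placing $\ker(A_\lambda-\bar z_0)^k$ inside the span of $F_\lambda P(1-\bar z_0 z)^{-k}$ with $\deg P\le k-1$, and a dimension count, with the converse direction cited from \cite[X-14]{Sarason4} in both cases. Your organization is marginally tidier in that you prove the inclusion unconditionally, whereas the paper's induction carries the dimension hypothesis at each step (and thus implicitly needs that $\dim\ker(A_\lambda-\bar z_0)^{k+1}=k+1$ forces $\dim\ker(A_\lambda-\bar z_0)^{k}=k$), but this is a difference of bookkeeping, not of method.
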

\begin{proof}
 In view of \cite[X-14]{Sarason4} it is
enough to show
\[
\dim\ker(A_{\lambda}-\bar{z}_0)^k= k\implies
F_{\lambda}(1-\bar{z}_0z)^{-k}\in H^2.
\]
We will show that if $
\dim\ker(A_{\lambda}-\bar{z}_0)^k=k$, then $
\ker(A_{\lambda}-\bar{z}_0)^k $ is spanned by $\frac
{F_{\lambda}}{1-\bar{z}_0z},$ $ \frac
{zF_{\lambda}}{(1-\bar{z}_0z)^2}, \dots
\frac{z^{k-1}F_{\lambda}}{(1-\bar{z}_0z)^k}$. We proceed by
induction. The case $k=1$ is proved in \cite[X-13]{Sarason4}. Suppose that
\[
\ker(A_{\lambda}-\bar{z}_0)^k=\left\{ \frac {c_0F_{\lambda}}{1-\bar{z}_0z}
+ \frac {c_1zF_{\lambda}}{(1-\bar{z}_0z)^2}+\dots
+\frac{c_{k-1}z^{k-1}F_{\lambda}}{(1-\bar{z}_0z)^k}\colon\,c_0,c_1,c_2,\ldots,c_{k-1}\in\C\right\}
\]
and $(A_{\lambda}-\bar{z_0})g \in
\ker(A_{\lambda}-\bar{z_0})^{k}$. Then
\[
(A_{\lambda}-\bar{z_0})g = \frac {c_0F_{\lambda}}{1-\bar{z}_0z} +
\frac {c_1zF_{\lambda}}{(1-\bar{z}_0z)^2}+\dots
+\frac{c_{k-1}z^{k-1}F_{\lambda}}{(1-\bar{z}_0z)^k}
\]
for some  $c_0,c_1,c_2,\ldots,c_{k-1}$. Hence, by (\ref{al}),
\[
\frac{g(z)-g(0)}{z}- \frac{g(0)}{F_{\lambda}(0)}\frac
{F_{\lambda}(z)-F_{\lambda}(0)}{z} -\bar{z}_0g= \frac
{c_0F_{\lambda}}{1-\bar{z}_0z} + \frac
{c_1zF_{\lambda}}{(1-\bar{z}_0z)^2}+\dots
+\frac{c_{k-1}z^{k-1}F_{\lambda}}{(1-\bar{z}_0z)^k} \] or,
equivalently,
\[
(1-\bar{z}_0z)g= \frac{g(0)}{F_{\lambda}(0)}F_{\lambda}+ \frac
{c_0zF_{\lambda}}{1-\bar{z}_0z} +\dots
+\frac{c_{k-1}z^{k}F_{\lambda} }{(1-\bar{z}_0z)^k}.
\]
\end{proof}
The next lemma will allow  us to depict  $\ker (Y^*-\bar{z}_0)^k$
explicitly.
\begin{lemma}  If   $\mu_{\lambda}$ is absolutely continuous, then
for any positive integer $k$ and any $z_0\in\partial\D$,
\begin{equation}
\ker(Y^*-\bar{z}_0)^k= W_{\lambda}\ker
(A_{\lambda}-\bar{z}_0)^k.\label{Tlambda}
\end{equation}
\end{lemma}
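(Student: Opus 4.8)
The plan is to exploit the intertwining relation (\ref{inter}) together with the fact that, under the assumption that $\mu_{\lambda}$ is absolutely continuous, the operator $W_{\lambda}$ is an isometry of $H^2$ \emph{onto} $\H(b)$, hence a unitary; in particular $W_{\lambda}$ is both injective and surjective. The whole lemma is then the standard assertion that a unitary equivalence carries the generalized eigenspace of one operator onto that of the other.

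First I would upgrade the intertwining identity from the bare first power to the shifted $k$-th power. Subtracting $\bar{z}_0W_{\lambda}$ from both sides of (\ref{inter}) gives
\[
(Y^*-\bar{z}_0)W_{\lambda}=W_{\lambda}(A_{\lambda}-\bar{z}_0),
\]
and a routine induction on $k$ (applying this relation repeatedly and moving $W_{\lambda}$ leftward past each factor) yields
\[
(Y^*-\bar{z}_0)^kW_{\lambda}=W_{\lambda}(A_{\lambda}-\bar{z}_0)^k.
\]

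With this identity the two inclusions follow formally. For $W_{\lambda}\ker(A_{\lambda}-\bar{z}_0)^k\subseteq\ker(Y^*-\bar{z}_0)^k$, I take $h\in\ker(A_{\lambda}-\bar{z}_0)^k$ and compute
\[
(Y^*-\bar{z}_0)^kW_{\lambda}h=W_{\lambda}(A_{\lambda}-\bar{z}_0)^kh=0,
\]
so $W_{\lambda}h\in\ker(Y^*-\bar{z}_0)^k$. For the reverse inclusion, I take $g\in\ker(Y^*-\bar{z}_0)^k$ and use surjectivity of $W_{\lambda}$ to write $g=W_{\lambda}h$ with $h\in H^2$; then $W_{\lambda}(A_{\lambda}-\bar{z}_0)^kh=(Y^*-\bar{z}_0)^kg=0$, and injectivity of $W_{\lambda}$ forces $(A_{\lambda}-\bar{z}_0)^kh=0$, i.e.\ $h\in\ker(A_{\lambda}-\bar{z}_0)^k$ and hence $g\in W_{\lambda}\ker(A_{\lambda}-\bar{z}_0)^k$.

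I do not expect a genuine obstacle here. The only points requiring care are precisely the two properties supplied by the absolute continuity hypothesis: that $W_{\lambda}$ is surjective (needed to produce the preimage $h$ in the reverse inclusion) and injective (needed to cancel $W_{\lambda}$ and conclude $(A_{\lambda}-\bar{z}_0)^kh=0$). Since $Y^*$ is bounded and $A_{\lambda}$, given by (\ref{al}), is a bounded perturbation of $S^*$, forming the powers $(A_{\lambda}-\bar{z}_0)^k$ and manipulating them raises no domain issues, so the argument is purely algebraic once the intertwining relation has been raised to the $k$-th power.
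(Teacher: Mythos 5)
Your proof is correct and follows essentially the same route as the paper: both exploit the intertwining relation (\ref{inter}) together with the fact that $W_{\lambda}$ is an isometry of $H^2$ onto $\H(b)$, and both use surjectivity to produce a preimage and injectivity to cancel $W_{\lambda}$. The only cosmetic difference is that you raise the one-sided identity $(Y^*-\bar{z}_0)W_{\lambda}=W_{\lambda}(A_{\lambda}-\bar{z}_0)$ directly to the $k$-th power, whereas the paper first establishes $W_{\lambda}W_{\lambda}^*=\id$ on $\H(b)$ and iterates the conjugated form $(Y^*-\bar{z}_0)^k=W_{\lambda}(A_{\lambda}-\bar{z}_0)^kW_{\lambda}^*$; the underlying argument is identical.
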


\begin{proof}
Since $W_{\lambda}$ is an isometry of $H^2$ onto $\H(b)$, we  have
$W_{\lambda}^* W_{\lambda}=\id$ on $H^2$. Indeed, for any $f\in
H^2$,
\[
\langle f,f\rangle_{H^2}=\langle
W_{\lambda}f,W_{\lambda}f\rangle_{{\H}(b)}=\langle W_{\lambda}^*
W_{\lambda}f, f\rangle_{H^2}.
\]

Next, if $g\in\H(b)$  is the image of  $f\in H^2$ under $W_\lambda$,
then  $W^*_\lambda W_\lambda f=f$ implies $W_\lambda
W_{\lambda}^* g=g$, i.e. $W_{\lambda}W_{\lambda}^*=\id$ on
$\H(b)$.

Note that (\ref{inter}) implies
\[
W_{\lambda}(A_{\lambda}-\bar{z}_0)=(Y^* - \bar{z}_0)W_{\lambda}.
\]
Since $W_\lambda W^*_\lambda=\id$ on $\H(b)$ we get
\[
(Y^{*}-\bar{z}_0)=W_{\lambda}(A_{\lambda}-\bar{z}_0)W_{\lambda}^{*}
\]
and, by iteration,
\begin{equation}
(Y^{*}-\bar{z}_0)^k=W_{\lambda}(A_{\lambda}-\bar{z}_0)^k\label{sar}
W_{\lambda}^{*}.
\end{equation}

Assume that   $g\in\H(b) $ and $g=W_{\lambda}f$  $(f\in H^2)$ and
observe that by (\ref{sar}),  $g=W_{\lambda}f
\in\ker(Y^{*}-\bar{z}_0)^k$ if and only if
\[
0=(Y^{*}-\bar{z}_0)^kg=W_{\lambda}(A_{\lambda}-\bar{z}_0)^k
W_{\lambda}^{*}W_{\lambda}f=W_{\lambda}(A_{\lambda}-\bar{z}_0)^kf
\]
which means that $f\in \ker(A_{\lambda}-\bar{z}_0)^k$.
\end{proof}

\section{Proof of Theorem 1}
In the proof we  will use the following result stated in Chapter VII
in \cite{Sarason4}.

\begin{theoremS} Assume that   a point $z_0\in\partial\D$ and
for $\lambda\in\partial\D\setminus\{b(z_0)\}$ the measure
$\mu_{\lambda}$ is absolutely continuous. Then the following
conditions are equivalent.
\begin{enumerate}
\item[(i)] Each function in $\H(b)$ and all of its
derivatives up to order $k$ have nontangential limits at $z_0$.
\item[(ii)] The function $F_{\lambda}(1-\bar{z}_0 z)^{-k-1}\in H^2$.
\item[(iii)] The functions $\frac{\partial^{k}k_w^b}{\partial\bar{w}^{k}}$
are bounded in the norm as $w$ tends nontangentially to $z_0$.
\end{enumerate}
\end{theoremS}

The proof of this theorem for  the case when $k=0$ is  given in
\cite [VI-4]{Sarason4}.  Since  the proof of the general case is
only sketched in the cited reference, we include it here
for the reader's convenience.

\begin{proof}[Proof of Sarason's Theorem] (i)$\implies$(iii). Since for $f\in\H(b)$
\[
f^{(k)}(w)=\biggl\langle f,\frac{\partial^{k}
k^b_w}{\partial\bar{w}^k}\biggr\rangle, \quad w\in\D
\]
and  the nontangential limit of $f^{(k)}(w) $ at $z_0$  exists,
$\sup\{|f^{(k)}(w)|\colon\,w\in S_C(z_0)\}$ is finite. Let
$\varphi_w(f)=f^{(k)}(w)$ be a bounded linear functional on $\H(b)$.
The Banach-Steinhaus theorem implies that there exists a constant
$M>0$ such that
\[
\|\varphi_w\|=\biggl\|\frac{\partial^{k}k_w^b}{\partial\bar{w}^{k}}\biggr\|\leqslant
M
\]
for every $w\in S_C(z_0)$.

(iii)$\implies$(i). We proceed by induction. Assume the implication holds true  for
 $k=0,1, \dots, m-1$ and suppose that   the functions $\frac{\partial^{m}k_w^b}{\partial\bar
{w}^{m}}$ are bounded in the norm as $w$ tends nontangentially to
$z_0$. Then there exists  a sequence $\{w_n\}\subset\D$ that
converges nontangentially to $z_0$ for which
$\left\{\frac{\partial^m k_{w_n}^b}{\partial\bar{w}_n^m}\right\}$
converges weakly to $h\in\H(b)$.

Thus  we have
\[
 h(z)=\langle h, k_z^b\rangle=\lim_{n\to\infty}
\biggl\langle\frac{\partial^m k_{w_n}^b}{\partial \bar{w}_n^m},
k_z^b\biggr\rangle=\lim_{n\to\infty}\frac{\partial^m
k_{w_n}^b}{\partial \bar{w}_n^m}(z).
\]

Since
\begin{equation}
\frac{\partial^{m}k_w^b}{\partial\bar{w}^{m}}(z) =
\sum_{j=0}^{m}\binom{m}{j}\frac{\partial^{m-j}(1- \overline
{b(w)}b(z))}{\partial\bar{w}^{m-j}} \frac{j! z^j}{(1-\bar{w}
z)^{j+1}}, \label{mder}
\end{equation}
we have
\begin{align}\label{bderiv}
h(z)&=\lim_{n\to\infty}\frac{\partial^m k_{w_n}^b}{\partial \bar{w}_n^m}(z)\\
&=\lim_{n\to\infty}\frac{(1-\overline{b(w_n)}b(z))m!z^m}{(1-
\bar{w}_nz)^{m+1}}+
\sum_{j=0}^{m-1}\binom{m}{j}\lim_{n\to\infty}\frac{-\overline
{b^{(m-j)}(w_n)}b(z)j! z^j}{(1-\bar{w}_n z)^{j+1}}. \nonumber
\end{align}
Furthermore Proposition 1 implies that the norms
$\|\frac{\partial^{k}k_w^b}{\partial\bar{w}^{k}}\|, k=0,1,\dots,
m-1$, stay also bounded as $w$ tends nontangentially to $z_0$, .
Hence by induction hypothesis each function in $\H(b)$ and all of
its derivatives up to order $m-1$ have nontangential limits at
$z_0$. Additionally, by Proposition 2,  the derivative of order $m$
of $b$  has its nontangential limit at $z_0$. It then follows from
(\ref{bderiv}) that $\frac{\partial^{m}k_w^b}{\partial\bar {w}^{m}}$
converges to $h$ pointwise  as $w$ tends to $ z_0$ nontangentially.
Put $h= \frac{\partial^{m}k_{z_0}^b}{\partial\bar{z}_0^{m}}$ and
note that for $z\in\D$,
\[
\lim_{\substack{w\to z_0 \\ \angle}}(k_z^b)^{(m)}(w)=\overline{\frac
{\partial^{m} k_{z_0}^b}{\partial\bar{z}_0^m}(z)}.
\]
Indeed,
\[
\lim_{\substack{w\to z_0 \\ \angle}}(k_z^b)^{(m)}(w)=\lim_{\substack{w\to z_0 \\
\angle}}\biggl\langle k_z^b,\frac{\partial^m k_w^b}{\partial \bar
{w}^m}\biggr\rangle=\lim_{\substack{w\to z_0 \\
\angle}}\overline{\frac{\partial^m k_w^b}{\partial \bar{w}^m}(z)}=\overline{\frac {\partial^m
k_{z_0}^b}{\partial\bar{z}_0^m}(z)}=(k_z^b)^{(m)}(z_0) .
\]
This means that for every $z\in\D$, the derivative $(k_z^b)^{(m)}$
has a nontangential limit at $z_0$. Since the functions $k_z^b$ span
the space $\H(b)$ and the norms of
$\frac{\partial^{m}k_w^b}{\partial\bar {w}^{m}}$ are bounded as $w$
tends nontangentially to $z_0$, the desired  conclusion follows.

(ii)$\implies$(iii).  Assume that the implication holds for
$k=0,1,\dots, m-1$ and  $F_{\lambda}(1-\bar z_0 z)^{-m-1}\in H^2$
Observe first that if  $F_{\lambda}(1-\bar z_0 z)^{-m-1}\in H^2$,
then also $F_{\lambda}(1-\bar z_0 z)^{-k-1}\in H^2$ for
$k=0,1,\dots, m $.

Let $V_{\bar{\lambda}b}$ be defined by (\ref{Vb}) with $b$ replaced
by $\bar{\lambda} b$. Since the pair $(a,\lambda b)$ is special,   we have
\[
V_{\bar{\lambda}b}(1-\lambda\overline
{b(w)})k_{w}=W_\lambda(F_{\lambda}(1-\lambda\overline
{b(w)})k_w)=k_{w}^b,\quad w\in\D,
\]
(see \cite[p.18]{Sarason4}, \cite[Vol.2, p.141]{FM}).  Since
\begin{equation}
W_\lambda\left(\frac{\partial^{m}((1-\lambda \overline
{b(w)})k_w)}{\partial\bar{w}^m}F_{\lambda}
\right)=\frac{\partial^{m}k_w^b}{\partial\bar{
w}^{m}}.\label{kderivative}
\end{equation}
and
\begin{equation}
\frac{\partial^{m}((1-\lambda \overline
{b(w)})k_w)}{\partial\bar{w}^{m}} =
\sum_{j=0}^{m}\binom{m}{j}\frac{\partial^{m-j}(1-\lambda \overline
{b(w)})}{\partial\bar{w}^{m-j}} \frac{j! z^j}{(1-\bar{w}
z)^{j+1}}\label{kkder}
\end{equation}
we see that the preimage of
$\frac{\partial^{m}k_w^b}{\partial\bar{w}^{m}}$ is a linear
combination of the functions
$\frac{F_{\lambda}}{(1-\bar{w}z)^{j+1}}$, $j=0,1,2,\dots, m$, whose
coefficients depend on $\overline{b(w)}, \overline{b'(w)},\dots
,\overline{b^{(m)}(w)}$. By the induction hypothesis
$\frac{\partial^{m-1}k_w^b}{\partial\bar{w}^{m-1}}$ are bounded as
$w$ tends nontangentially to $z_0$. By what we have already proved,
this implies the existence of the limits of $\overline{b(w)},
\overline{b'(w)},\dots,\overline{b^{(m)}(w)}$ as  $w\to z_0$
nontangentially.  Now (\ref{kderivative}), (\ref{kkder}) and the
fact that $W_{\lambda} $ is an isometry imply that the norms of
$\frac{\partial^{m}k_w^b}{\partial\bar w^{m}}$ stay  bounded as $w$
converges nontangentially to $z_0$.

(iii)$\implies$(ii). Assume that the implication holds true for
$k=0, 1,\dots, m-1$ and $\frac{\partial^{m}k_w^b}{\partial\bar{
w}^{m}}$ are bounded in the norm as $w$ tends nontangentially to
$z_0$. Since (iii) is equivalent to (i),  the nontangential limits
 of $\overline{b(w)},
\overline{b'(w)},\dots,\overline{b^{(m)}(w)}$ as $w\to z_0$ exist.
By the induction hypothesis the functions
$\frac{F_{\lambda}}{(1-\bar{z}_0z)^{k+1}}$, $k=0,1,\dots, m-1$ are
in $H^2$. Finally, passage to the limit in (\ref{kderivative}) and
(\ref{kkder}) as  $w\to z_0$ nontangentially and the induction
hypothesis show that $\frac{(1-\lambda
\overline{b(z_0)})F_{\lambda}}{(1-\bar{z}_0z)^{m+1}}$ is in $H^2$.
Since $\lambda\ne b(z_0)$ our claim follows.
\end{proof}

\begin{proof}[Proof of Theorem 1]

We know from \cite[X-14]{Sarason4} that for $k=0,1,2\dots$
$\ker(A_\lambda-\bar{z}_0)^{k+1}$ has dimension $k+1, $  and is
spanned by the functions $F_{\lambda}(1-\bar{z_0}z)^{-1}$,
$F_{\lambda}(1-\bar{z}_0z)^{-2}$, \ldots,
$F_{\lambda}(1-\bar{z}_0z)^{-k-1}$. By Lemma 1 the kernel of
$(Y^{*}-\bar{z}_0)^k$ is spanned by the images of these functions
under $W_{\lambda}$. So, it is enough to show that for $k=0,1,\dots
$, $W_{\lambda}
\left(\frac{F_{\lambda}}{(1-\bar{z}_0z)^{k+1}}\right)$ is a linear combination of
$\upsilon^0_{b,z_0}$, $\upsilon^1_{b,z_0},  \ldots,
\upsilon^k_{b,z_0}$ defined in Introduction. We proceed by
induction. The case when $k=0$ has been proved in\cite{NSSW}. Assume
that the statement is true for $k=0,1,\dots, m-1$ and
$F_{\lambda}(1-\bar{z}_0 z)^{-m-1}$ is in $H^2$. Let $\{w_n\}\subset
\D$  be a sequence converging nontangentially to $z_0$. Then
\begin{align*}
{}&W_\lambda\left(\frac{\partial^{m}(1-\lambda \overline
{b(w_n)})k_{w_n}}{\partial\bar {w}_n^m}F_{\lambda} \right)\\
&=W_\lambda\left(\frac {(1-\lambda\overline{ b(w_n)})
m!z^mF_{\lambda}}{(1-\bar{w}_n z)^{m+1}}\right)-\sum_{j=1}^{m}\binom
mjW_{\lambda}\left(\frac{\lambda
\overline{b^{(j)}(w_n)}(m-j)!z^{m-j}F_{\lambda}}{(1-\bar{w}_nz)^{m-j+1}}\right)\\
&=\upsilon^m_{b,w_n}.
\end{align*}
Our assumption implies that
$\lim_{n\to\infty}b^{(j)}(w_n)=b^{(j)}(z_0),\  j=0,\dots, m$ and the
dominated convergence theorem  implies that the norm  of the each
function the operator $W_{\lambda}$ is acting on converges, as
$n\to\infty$, to a norm of a function in $H^2$ (which is of the form
$\frac{c_kF_{\lambda}}{(1-\bar{z}_0z)^{k+1}}, k=0,1,\dots, m$ and
$c_m\ne 0$). So the passage to the limit yields
\[
W_{\lambda}\left(\frac {(1-\lambda\overline{ b(z_0)})
m!z^mF_{\lambda}}{(1-\bar{z}_0z)^{m+1}}\right)-\sum_{j=1}^{m}\binom
mjW_{\lambda}\left(\frac{\lambda
\overline{b^{(j)}(z_0)}(m-j)!z^{m-j}F_{\lambda}}{(1-\bar{z}_0z)^{m-j+1}}\right)=\upsilon^m_{b,z_0}.
\]
Since by the induction hypothesis the second term  on the left-hand
side of the last equality is a linear combination of
$\upsilon^0_{b,z_0}$, $\upsilon^1_{b,z_0},  \ldots,
\upsilon^{m-1}_{b,z_0}$ our proof is finished.
\end{proof}

\begin{rems} The complement of $\M(a)$ in $\H(b)$ for the case when pairs $(b,a)$ are rational has been studied
for example   in\cite{CR}, \cite{FHR1}, \cite{Sarason5}, and
\cite{LN}. Also in \cite{LMT} this space has been described for
concrete nonextreme functions $b$ that are not rational. Analogous
result to that stated in Theorem 1 has been obtained in \cite{FHR1}
for rational pairs (or their positive powers) where the
corresponding point $z_0\in
\partial\D$ is a zero of order $m-1$ of the rational function $a$.
One can easily check   that in such a case there  exists a
$\lambda\in\partial\D$ for which $F_{\lambda}{(1-\bar{z}_0z)^{m}}\in
H^2$ and the hypotheses of Theorem 1 are satisfied.
\end{rems}

\end{document}